\documentclass[reqno]{amsart}


\begin{document}
\title[
 Inverse source problem for advection-diffusion]
{A novel method to solve inverse source problem for advection-diffusion equation from final data}

\author[Z. Li 
]
{Zhiyuan Li, \quad Gongsheng Li, \quad Xiangzheng Jia}

\address{Zhiyuan Li, \quad Gongsheng Li, \quad Xianzheng Jia \newline
School of Mathematics and Statistics,
Shandong University of Technology.
266 Xincunxi Road, Zibo, Shandong 255049, China}
\email{zyli@sdut.edu.cn,\ ligs@sdut.edu.cn,\ jxz\_1@sdut.edu.cn}

\thanks{Submitted \today.}
\subjclass[2010]{35K20, 35R30, 35B53}
\keywords{advection-diffusion; inverse problem; final observation; uniqueness; \hfill\break\indent  Liouville's theorem.
}

\begin{abstract}
In this article, for an advection-diffusion equation we study an inverse problem for restoration of source temperature from the information of final temperature profile. The uniqueness of this inverse problem is established by taking an integral transform and using Liouville's theorem (complex analysis).
\end{abstract}

\maketitle
\numberwithin{equation}{section}
\newtheorem{lemma}{Lemma}[section]
\newtheorem{theorem}{Theorem}
\allowdisplaybreaks

\section{Introduction and main results}
Throughout this paper, assuming $T$ is a given positive number, and $\Omega\subset\mathbb R^d$ is a bounded domain with a sufficiently smooth boundary $\partial\Omega$, which is defined e.g., by some $C^2$ functional relations, we study the following initial-boundary value problem
\begin{equation} \label{eq}
\left\{
\begin{alignedat}{2}
&\partial_t u - L u = \rho(t)f(x), &\quad&  (x,t) \in Q, \\
&u(x,0)=0,&\quad& x\in \Omega, \\
&u(x,t)=0, &\quad&  (x,t)\in \partial\Omega\times(0,T],
\end{alignedat}
\right.
\end{equation} 
where $Q:=\Omega\times(0,T]$, $\rho$ is known as a $C^1[0,T]$-function, and $L$ is a second-order linear differential operator from $H^2(\Omega)\cap H_0^1(\Omega)$ to $L^2(\Omega)$ which can be written in the following form
$$
Lu:= \sum_{i,j=1}^d \partial_i(a_{ij}(x)\partial_j u) + \sum_{j=1}^d b_j(x)\partial_j u + c(x)u,\quad u\in H^2(\Omega)\cap H_0^1(\Omega),
$$
where $c,b_j\in L^\infty(\Omega)$, $j=1,\cdots,d$, and $a_{ij}=a_{ji}\in C^1(\overline\Omega)$, $1\le i,j\le d$ are given functions. Moreover there exists a positive constant $a_0 > 0$ such that
$$
\sum_{i,j=1}^d a_{ij}(x) \xi_i\xi_j \ge a_0 |\xi|^2,\quad \forall x\in\overline\Omega,\ \forall\xi:=(\xi_1,\cdots,\xi_d)\in\mathbb R^d.
$$

The differential equation in \eqref{eq} is called an advection-diffusion equation and is used as model equation for the description of density fluctuations in a material undergoing diffusion. It is also used to describe processes exhibiting diffusive-like behavior, for instance the \lq diffusion' of alleles in a population in population genetics (see, e.g., \cite{AW75}). We also refer to several fields of applications such as hydrology \cite{B72}, chemistry \cite{BN91}, and also in spatial ecology to modelize the dynamic of a population \cite{M02}. There are also many works from theoretical aspects and here we do not intend to give a complete list of references, and the readers can consult Chapter 7 in \cite{Evans} and \cite{F08} for example.  

In \eqref{eq}, the term $\rho(t)f(x)$ models a source term, and it is crucial to know or determine the coefficient $f$. For example, \cite{CC04} pointed out that in population dynamic, the extinction of the population, its persistence and its evolution is conditioned by this term $f$. However, usually this term cannot be directly measured due to the mixing of the effects of several factors, which requires
one to use inverse problems to identify these quantities by involving additional information that can be observed or measured practically. In other words, our main concern is:

{\bf Inverse source problem.} Let us assume that $\rho$ is known. Determine the unknown source magnitude $f$ in $\Omega$ from the final state observation 
$$
u(x,T),\quad \mbox{$x\in\Omega$.}
$$

Similar kinds of inverse problems have been studied by many authors. For $b_j\equiv0$, $j=1,\cdots,d$, \cite{HS13} and \cite{R80} established a uniqueness of the inverse problem in determining the spacewise-dependent source by the use of semi-group theory. For a more general source term $\rho$ which is $x$- and $t$-dependent, as is seen from Theorem 3.1 in \cite{I91}, the assumption $\rho(x,t)>0$ in $\overline Q$ is not sufficient to guarantee the uniqueness of the inverse problem. Actually, in this case, we need to assume $\partial_t \rho(x,t)>0$ in $\overline Q$. However, under a further assumption $\partial_t\rho>0$, \cite{I91} employed the strong maximum principles for the elliptic and parabolic equations to derive the uniqueness. There are some other tricks to overcome the ill-posedness of this inverse problem, we refer to \cite{CY96} and \cite{CY99} where the diffusion equation was parametrized by a diffusion coefficient and a stability inequality was established except for a countable set of parameters provided that $\rho(\cdot,T)>0$ in $\overline\Omega$, and refer to \cite{BC16} in which the Carleman estimate was used to recover the source from the measurement of the temperature at fixed time provided that the source is known in an arbitrary subdomain.

In this paper, for the advection-diffusion equation in \eqref{eq}, we show that in the case where $\rho$ is only $t$-dependent, the assumption $\rho>0$ in $[0,T]$ is enough to determine the unknown source term from the final overdetermination. We have

\begin{theorem}
\label{thm-unique}
Assume $\rho\in C^1[0,T]$ is positive for any $t\in[0,T]$. Let $f\in L^2(\Omega)$ and suppose $u$ solves the initial-boundary value problem \eqref{eq}. Then $u(\cdot,T)\equiv0$ in $\Omega$ implies $f\equiv0$ in $\Omega$.
\end{theorem}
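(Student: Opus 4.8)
The plan is to convert the overdetermined parabolic problem into an elliptic identity that depends holomorphically on a complex spectral parameter, and then to run a Liouville argument on a scalar quotient built from the resolvent of $L$. First I would apply the temporal transform
\[
\hat u(x,z):=\int_0^T e^{-zt}u(x,t)\,dt,\qquad \hat\rho(z):=\int_0^T e^{-zt}\rho(t)\,dt,
\]
both of which are entire in $z\in\mathbb C$ because the integration is over the finite interval $[0,T]$. Multiplying $\partial_t u-Lu=\rho(t)f$ by $e^{-zt}$, integrating over $(0,T)$, and integrating the time derivative by parts produces the endpoint contribution $e^{-zT}u(\cdot,T)-u(\cdot,0)$. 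Here the initial condition $u(\cdot,0)=0$ and, crucially, the final overdetermination $u(\cdot,T)=0$ make this term vanish, leaving
\[
(z-L)\,\hat u(\cdot,z)=\hat\rho(z)\,f\ \ \text{in }\Omega,\qquad \hat u(\cdot,z)\in H^2(\Omega)\cap H^1_0(\Omega),
\]
for every $z\in\mathbb C$, with $z\mapsto\hat u(\cdot,z)$ entire as an $L^2(\Omega)$-valued map. This is the integral-transform step, and the vanishing of the endpoint term is exactly where the hypothesis enters.

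Next, for a fixed but arbitrary $g\in L^2(\Omega)$ I would form the scalar function
\[
m_g(z):=\big\langle (z-L)^{-1}f,\,g\big\rangle=\frac{\langle \hat u(\cdot,z),g\rangle}{\hat\rho(z)},
\]
the second equality holding wherever $z$ lies in the resolvent set of $L$ and $\hat\rho(z)\neq0$. The numerator $\langle\hat u(\cdot,z),g\rangle$ is entire, so $m_g$ is a priori meromorphic with possible poles only at zeros of $\hat\rho$ or at eigenvalues of $L$. The positivity of $\rho$ gives $\hat\rho(z)=\int_0^T e^{-zt}\rho(t)\,dt>0$ for every real $z$, so $\hat\rho$ never vanishes on $\mathbb R$; combined with the resolvent decay $\|(z-L)^{-1}\|\le M/\mathrm{dist}(z,\sigma(L))$, one checks that $m_g(z)\to0$ as $\mathrm{Re}\,z\to\pm\infty$, the exponential growth of numerator and denominator cancelling. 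The goal is then to show that $m_g$ is in fact a bounded entire function: by Liouville's theorem it must be constant, and the decay forces $m_g\equiv0$. Since $g$ is arbitrary this yields $(z-L)^{-1}f=0$, hence $\hat u\equiv0$, so $u\equiv0$; the equation then returns $\rho f\equiv0$, whence $f\equiv0$ because $\rho>0$.

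I expect the main obstacle to be the non-self-adjointness introduced by the advection term $\sum_j b_j\partial_j u$, which permits $L$ to have complex eigenvalues; these are precisely the candidate poles of $m_g$ and the only places where the Liouville argument can break down. To remove them I would pair the elliptic identity with an eigenfunction $\varphi$ of the adjoint operator, $L^{*}\varphi=\mu\varphi$, which reduces everything to the scalar ODE $p'(t)=\mu\,p(t)+\langle f,\varphi\rangle\rho(t)$ for $p(t):=\langle u(\cdot,t),\varphi\rangle$ with $p(0)=p(T)=0$; solving it gives $\langle f,\varphi\rangle\,\hat\rho(\mu)=0$, so the apparent pole at $\mu$ disappears as soon as $\hat\rho(\mu)\neq0$. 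Establishing $\hat\rho(\mu)\neq0$ at every spectral point is the heart of the matter: for real $\mu$ it is immediate from $\rho>0$, while for complex $\mu$ it requires locating the eigenvalues, which lie in a parabolic region $|\mathrm{Im}\,\mu|\le C\sqrt{|\mathrm{Re}\,\mu|}+C$ about the negative real axis, away from the zeros of the entire function $\hat\rho$. It is here that the finer complex-analytic input, together with the positivity of $\rho$, is indispensable; once the absence of such poles and the boundedness of $m_g$ are secured, Liouville's theorem closes the argument.
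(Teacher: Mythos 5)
Your skeleton---use $u(0)=u(T)=0$ to kill the endpoint terms under a finite Laplace transform, divide by $\widehat\rho$, and invoke Liouville---is exactly the paper's strategy, but the two quantitative facts you defer as ``the heart of the matter'' are the entire proof, and your proposed route to them does not work. First, the bound $\|(z-L)^{-1}\|\le C/\mathrm{dist}(z,\sigma(L))$ is a normal-operator fact; for the non-self-adjoint $L$ considered here (the advection term is the whole point) the resolvent is controlled only by the distance to the numerical range, so the decay and boundedness claims you build on it are unfounded. Second, and fatally, your assertion that the parabolic region $|\Im\mu|\le C\sqrt{|\Re\mu|}+C$ containing $\sigma(L)$ stays ``away from the zeros of $\widehat\rho$'' is false in general: for $\rho\equiv1$ one has $\widehat\rho(z)=(1-e^{-zT})/z$, whose zeros $z=2\pi ik/T$, $k\ne0$, lie on the imaginary axis and inside that parabola as soon as $2\pi/T\le C$; since $T$, $\rho$ and $L$ are independent data, nothing in your argument prevents such a zero from sitting at, or arbitrarily near, a complex eigenvalue, or merely inside the numerical-range region. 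At a common point of the zero set of $\widehat\rho$ and $\sigma(L)$ neither of your two representations of $m_g$ is holomorphic, so you cannot even assert removability; and near a zero of $\widehat\rho$ lying inside the numerical-range region---even one that is \emph{not} an eigenvalue---you have no bound on $m_g$ at all (the denominator is small there and the resolvent estimate is unavailable), so the global boundedness that Liouville requires is never established.

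The paper closes precisely this hole with the two lemmas for which your plan has no counterpart. Lemma \ref{lem-rho} gives the lower bound $|\widehat\rho(s)|\ge C\int_0^T e^{-t\Re s}\,dt$ on a thin sector $\{\Re s\le0,\ N|\Im s|\le|\Re s|\}$ around the negative real axis (by integration by parts, treating large and bounded $|s|$ separately), so $\widehat\rho$ is zero-free exactly where ellipticity is lost. Lemma \ref{lem-analytic} then proves the pointwise inequality $\|\widehat u(s)\|_{L^2(\Omega)}\le C|\widehat\rho(s)|\,\|f\|_{L^2(\Omega)}$ for \emph{every} $s\in\mathbb C$: off the sector it takes the coercive combination $N\Im B_s+\Re B_s$ of the sesquilinear form, which is what absorbs the advection term (and incidentally confines the spectrum to the sector); on the sector it pairs the trivial bound $\|\widehat u(s)\|\le C\int_0^T e^{-t\Re s}\,dt$ with Lemma \ref{lem-rho}. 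This single inequality makes every singularity of $\widehat u/\widehat\rho$ removable and the quotient globally bounded, which is what you are missing. Two smaller points: your adjoint-eigenfunction step is superfluous where it works, since at an eigenvalue with $\widehat\rho(\mu)\ne0$ the quotient is already holomorphic (the numerator $\langle\widehat u(\cdot,z),g\rangle$ is entire), while using it to conclude $f=0$ outright would additionally require completeness of the root system of the non-self-adjoint $L^{*}$, which you do not address; and the cleaner way to force the Liouville constant to vanish is the paper's asymptotics $s\widehat u(s)\to0$ and $s\widehat\rho(s)\to\rho(0)>0$ as $s\to+\infty$ along the real axis.
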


Here we should mention that in the case where $b_j=0$ and $c\ge0$, we can derive the uniqueness and stability results in the framework of the Fourier expansion. Indeed, by using the orthonormality of the Dirichlet eigensystem of elliptic operator $L$, the unknown source term $f$ can be represented by the final measurement. It will be difficult to follow this argument to derive the uniqueness in the general case. On the other hand, it is also very difficult to follow the semigroup arguments used in \cite{HS13} and \cite{R80} because of the nonsymmetric term. 

The key idea of our proof is to transfer the advection-diffusion equation to an elliptic one by using an integral transform with respect to $t$. Several technical lemmas for this elliptic equation are needed, so we collect them in Section \ref{sec-unique}. Preparing all necessities, say, Lemmas \ref{lem-rho} and \ref{lem-analytic}, by Liouville's theorem (complex analysis) we will finish the proof of Theorem \ref{thm-unique}. Finally, concluding remarks are given in Section \ref{sec-rem}.

\section{Proof of Theorem \ref{thm-unique}}\label{sec-unique}

In this section, we will give a proof of our main result. For this, we multiply both sides of the differential equation in \eqref{eq} by $e^{-(s+M)t}$ with $s\in\mathbb C$ and $M>0$, and integrate from $0$ to $T$ to derive
\begin{equation}
\label{trans-u}
\int_0^T \partial_t u(t) e^{-(s+M)t} dt - \int_0^T L u(t) e^{-(s+M)t} dt = f(x)\int_0^T \rho(t) e^{-(s+M)t} dt.
\end{equation}
From integration by parts, and noting that $u(0)=u(T)=0$, we further calculate the first term on the left hand side of the above identity as follows
$$
\int_0^T \partial_t u(t) e^{-(s+M)t} dt
=(s+M)\int_0^T  u(t) e^{-(s+M)t} dt.
$$
For short, we put 
\begin{equation}
\label{eq-trans-u}
\widehat u(s):=\int_0^T  u(t) e^{-(s+M)t} dt,\quad \widehat\rho(s):=\int_0^T\rho(t) e^{-(s+M)t} dt.
\end{equation}
Therefore \eqref{trans-u} can be rephrased by
\begin{equation}
\label{eq-lap-u}
- L \widehat u(s) + (s+M)\widehat u(s) = f(x)\widehat\rho(s),\quad s\in\mathbb C.
\end{equation}
Next we are to give an estimate for $\widehat u(s)$ for $s\in\mathbb C$ on the basis of the above equation. We first introduce a useful lemma which gives a lower bound of the function $|\widehat \rho(s)|$ for some $s\in\mathbb C$.
\begin{lemma}
\label{lem-rho}
Let $N>1$ be a sufficiently large and fixed constant. Assuming the function $\rho\in C^1[0,T]$ is positive in $[0,T]$. Then there exists a positive constant $C=C(\inf_{[0,T]} \rho=:\rho_0,T,N)$ such that the following inequality  
$$
|\widehat \rho(s)| = \left|\int_0^T \rho(t) e^{-st} dt \right| \ge C\int_0^T e^{-t\Re s} dt
$$ 
is valid for any $s\in\{s\in\mathbb C;\ \Re s\le 0,\ \Im s\ge0,\ N\Im s+\Re s\le0\}$.
\end{lemma}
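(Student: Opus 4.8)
The plan is to prove the lower bound by writing $s=\sigma+i\tau$ with $\sigma=\Re s\le0$ and $\tau=\Im s\ge0$, and then exploiting the cone condition $N\tau+\sigma\le0$, which reads $0\le\tau\le|\sigma|/N$ and gives $|s|^2=\sigma^2+\tau^2\le(1+N^{-2})\sigma^2$. The essential difficulty is that the oscillatory factor $e^{-i\tau t}$ may cause cancellation in the integral, so the naive estimate from positivity alone (which would work on the negative real axis, where $\rho\ge\rho_0>0$ forces $\widehat\rho(s)>0$) fails for $\tau>0$; the thinness of the admissible cone, guaranteed by taking $N$ large, is exactly what tames this oscillation. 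I would treat separately the regime of large $|\sigma|$ and the regime of bounded $|\sigma|$, fixing the threshold $\sigma_*$ depending only on $\rho_0$, $\|\rho\|_{C^1[0,T]}$ and $T$, and in particular independently of $N$.

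For large $|\sigma|$, integration by parts (here $\rho\in C^1$ is used) gives
\[
\widehat\rho(s)=\frac1s\Bigl[\rho(0)-\rho(T)e^{-sT}+\int_0^T\rho'(t)e^{-st}\,dt\Bigr].
\]
The boundary term $\rho(T)e^{-sT}$ has modulus $\rho(T)e^{|\sigma|T}\ge\rho_0 e^{|\sigma|T}$, which is insensitive to the phase and dominates both the constant $\rho(0)$ and the remainder, whose modulus is at most $\|\rho'\|_\infty(e^{|\sigma|T}-1)/|\sigma|$. Hence for $|\sigma|\ge\sigma_*$ the triangle inequality yields $|\widehat\rho(s)|\ge\tfrac12\rho_0 e^{|\sigma|T}/|s|$. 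Combining $|s|\le|\sigma|\sqrt{1+N^{-2}}$ with the identity $\int_0^Te^{-t\Re s}\,dt=(e^{|\sigma|T}-1)/|\sigma|\le e^{|\sigma|T}/|\sigma|$ then produces the claimed inequality with a constant of the form $\rho_0/(2\sqrt{1+N^{-2}})$.

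For the bounded regime $|\sigma|\le\sigma_*$, I would instead bound the real part directly. Since $0\le\tau\le|\sigma|/N\le\sigma_*/N$, choosing $N$ large enough that $\sigma_*T/N\le\pi/3$ forces $\tau t\le\tau T\le\pi/3$ for all $t\in[0,T]$, whence $\cos(\tau t)\ge\cos(\tau T)\ge\tfrac12>0$. Consequently
\[
|\widehat\rho(s)|\ge\Re\widehat\rho(s)=\int_0^T\rho(t)e^{-\sigma t}\cos(\tau t)\,dt\ge\tfrac12\rho_0\int_0^Te^{-t\Re s}\,dt,
\]
again of the required form. Taking $C$ to be the smaller of the two constants finishes the argument, and the resulting $C$ depends only on $\rho_0$ and $N$, as asserted.

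I expect the decisive point to be the quantitative competition in the large-$|\sigma|$ regime: one must verify that the threshold $\sigma_*$ can genuinely be fixed in terms of $\rho_0,\|\rho\|_{C^1[0,T]},T$ before $N$ is chosen, so that the later requirement ``$N$ large enough that $\sigma_*T/N\le\pi/3$'' does not circularly feed back into $\sigma_*$. This clean separation is what legitimizes the form $C=C(\rho_0,T,N)$ and clarifies the role of the hypothesis that $N$ be sufficiently large.
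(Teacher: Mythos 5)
Your proposal is correct and follows essentially the same route as the paper: integration by parts to isolate the boundary term $\rho(0)-\rho(T)e^{-sT}$, a split into large and bounded $|\Re s|$, and the thinness of the cone (taking $N$ large) to keep $\cos(t\Im s)\ge 1/2$ in the bounded regime. Your execution is in fact somewhat cleaner --- the direct triangle-inequality domination by $\rho(T)e^{|\Re s|T}$ replaces the paper's exact computation of $|I_1(s)|^2$, and the bound $|\widehat\rho(s)|\ge\Re\widehat\rho(s)$ replaces the paper's subtraction of the imaginary part --- and your explicit remark that the threshold $\sigma_*$ is fixed before $N$ is chosen makes precise a point the paper leaves implicit.
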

\begin{proof}
Firstly, by integration by parts we represent $\widehat \rho(s)$ as follows
$$
\int_0^T \rho(t) e^{-st} dt
= \frac{\rho(0) - \rho(T) e^{-sT} }{s}
+ s^{-1} \int_0^T \rho'(t) e^{-st} dt
=:I_1(s) + I_2(s).
$$
For $I_2(s)$, by a direct calculation, we find
$$
|I_2(s)| 
\le |s|^{-1} \int_0^T |\rho'(t)| e^{-t\Re s} dt
\le \frac{\|\rho\|_{C^1[0,T]}}{|s|}  \frac{1 - e^{-T\Re s}}{\Re s}.
$$
Moreover, the choice of $s$, say, $\Re s\le0$, $\Im s\ge0$ and $N\Im s+\Re s\le0$ imply that $N\Im s\le |\Re s|$, hence that 
\begin{equation}
\label{esti-s-Res}
|s| = \sqrt{(\Re s)^2 + (\Im s)^2} \le |\Re s| + |\Im s| \le (1+1/N) |\Re s|
\le 2 |\Re s|,\quad N>1,
\end{equation}
which can be further used to derive that
$$
|I_2(s)| 
\le \frac{2\|\rho\|_{C^1[0,T]}( e^{T|\Re s|} - 1)}{|s|^2}
\le \frac{4\|\rho\|_{C^1[0,T]} e^{T|\Re s|}}{|s|^2}.
$$
Let us turn to evaluating $I_1(s)$. For this, a direct calculation derives
$$
sI_1(s) = \rho(0) - \rho(T) e^{-T\Re s} \cos(T\Im s)  + i \rho(T) e^{-T\Re s} \sin(T\Im s), 
$$
hence that
$$
|sI_1(s)|^2 = \rho(T)^2 e^{-2T\Re s} -2\rho(0)\rho(T) e^{-T\Re s} \cos(T\Im s) + \rho(0)^2, 
$$
finally that
$$
|I_1(s)|^2 = \frac{\left(\rho(T) e^{-T\Re s} - \rho(0)\right)^2}{|s|^2} + 2\rho(0) \rho(T) e^{-T\Re s} \frac{1- \cos(T\Im s)}{|s|^2}.
$$
Moreover, by the use of the inequality
$$
\frac{|1-\cos t|}{t^2} \le C,\quad t\in\mathbb R,
$$
we see that
$$
\frac{|1- \cos(T\Im s)|}{|s|^2}
=\frac{|1- \cos(T\Im s)|}{|T\Im s|^2} \frac{|T\Im s|^2}{|s|^2} \le CT^2,\quad s\in\mathbb C.
$$
Combining all the above estimates, we find
$$
|I_1(s)|^2 \ge \frac{\left(\rho(T) e^{-T\Re s} - \rho(0)\right)^2}{|s|^2} - 2CT^2\rho(0) \rho(T) e^{-T\Re s}.
$$
Now for sufficiently large $|s|$, then $|\Re s|$ is also sufficiently large in view of \eqref{esti-s-Res}. Consequently there exists a constant $C=C(T,\rho)$ such the estimate
$$
|I_1(s)| \ge \frac{Ce^{-T\Re s}}{|s|}.
$$
privided that $|s|>K$ with $K>0$ is a sufficiently large constant. Finally, we have
$$
\left|\int_0^T \rho(t) e^{-st} dt\right|
\ge |I_1(s)| - |I_2(s)|
\ge \frac{e^{T|\Re s|}}{|s|} \left(C - \frac{4\|\rho\|_{C^1[0,T]}  }{|s|} \right)
\ge \frac{Ce^{T|\Re s|}}{2|s|} 
$$
for $|s|>K$ satisfying $\Re s\le0$, $\Im s\ge0$ and $N\Im s + \Re s\le0$, where $N\ge1$.

Next, we evaluate $\widehat \rho(s)$ in the case when $s$ lies in $\{s\in\mathbb C; |s|\le K,\ \Re s\le0,\ \Im s\ge0, N\Im s + \Re s\le0\}$. By taking sufficiently large $N$, we are led to 
$$
|\Im s| \le \frac{|\Re s|}{N} \le \frac{K}{N},
$$
which implies that 
$$
|\sin (t\Im s)| \le |t\Im s| \le \frac{TK}{N} \le \frac14,
$$
and 
$$
\cos (t\Im s) \ge \frac{1}2,
$$

We first rewrite $\widehat \rho(s)$ as follows
$$
\widehat\rho(s) = \int_0^T \rho(t) e^{-t\Re s} \cos (t\Im s) dt + i \int_0^T \rho(t) e^{-t\Re s} \sin (t\Im s) dt,
$$
hence that
$$
|\widehat\rho(s)| \ge \frac12 \int_0^T \rho(t) e^{t|\Re s|}  dt - \frac14 \int_0^T \rho(t) e^{t|\Re s|} dt
\ge\frac{\rho_0}4 \int_0^T e^{t|\Re s|} dt
= \frac{\rho_0(e^{T|\Re s|}- 1)}{4|\Re s|},
$$
where in the last inequality we used the assumption $\rm A1$.

Finally, we obtain 
$$
|\widehat \rho(s)| \ge C\frac{e^{T|\Re s|} - 1}{|\Re s|}>C_1>0,\quad s\in\{s\in\mathbb C;\Re s\le0, \Im s\ge0, N\Im s+ \Re s\le0\},
$$
where the constant $C_1>0$ is only dependent on $T,\rho,N$.
\end{proof}

\begin{lemma}
\label{lem-analytic}
Under the assumptions in Theorem \ref{thm-unique}, the function $\widehat u: \mathbb C\to H^2(\Omega)\cap H_0^1(\Omega)$ which is defined by \eqref{eq-trans-u} can be estimated as follows
\begin{equation}
\label{esti-u}
\|\widehat u(s)\|_{L^2(\Omega)} 
\le C|\widehat \rho(s)| \|f\|_{L^2(\Omega)},\quad s\in \mathbb C.
\end{equation}
\end{lemma}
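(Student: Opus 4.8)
The plan is to read \eqref{eq-lap-u} as an elliptic equation for $\widehat u(s)$, with the complex number $s+M$ playing the role of a spectral shift, and to establish \eqref{esti-u} by an a priori $L^2$ energy estimate. Concretely, I would test \eqref{eq-lap-u} against the complex conjugate $\overline{\widehat u(s)}$ and integrate over $\Omega$, so that the term $f(x)\widehat\rho(s)$ produces $\widehat\rho(s)\langle f,\widehat u(s)\rangle_{L^2(\Omega)}$ on the right-hand side, which is already bounded by $|\widehat\rho(s)|\,\|f\|_{L^2(\Omega)}\,\|\widehat u(s)\|_{L^2(\Omega)}$. This is exactly the shape of the desired inequality, provided a coercive lower bound for the left-hand side is in hand.

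First I would integrate by parts in the principal part $\int_\Omega\sum_{i,j}\partial_i(a_{ij}\partial_j\widehat u)\,\overline{\widehat u}\,dx$; since $\widehat u(s)\in H_0^1(\Omega)$ the boundary contribution vanishes, leaving the bilinear form $\int_\Omega\sum_{i,j}a_{ij}\partial_j\widehat u\,\overline{\partial_i\widehat u}\,dx$. Here the symmetry $a_{ij}=a_{ji}$ is essential: it forces this form to be real, and ellipticity then bounds it below by $a_0\|\nabla\widehat u(s)\|_{L^2(\Omega)}^2$ (applying the ellipticity inequality to the real and imaginary parts of $\nabla\widehat u$ separately). Taking real parts of the tested identity, the zeroth-order term $\int_\Omega c|\widehat u|^2\,dx$ and the drift term $\Re\int_\Omega\sum_j b_j\partial_j\widehat u\,\overline{\widehat u}\,dx$ are lower order; I would control the latter by Cauchy--Schwarz and Young's inequality, absorbing a small multiple of $\|\nabla\widehat u\|_{L^2(\Omega)}^2$ into the coercive term at the cost of a multiple of $\|\widehat u\|_{L^2(\Omega)}^2$.

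Collecting terms, the real part of the left-hand side is bounded below by $\frac{a_0}{2}\|\nabla\widehat u\|_{L^2(\Omega)}^2+(\Re s+M-M_0)\|\widehat u\|_{L^2(\Omega)}^2$, where $M_0$ depends only on $a_0$, $\|b_j\|_{L^\infty(\Omega)}$ and $\|c\|_{L^\infty(\Omega)}$. Choosing the fixed parameter $M$ larger than $M_0$ makes the coefficient of $\|\widehat u\|^2$ strictly positive whenever $\Re s\ge0$, and dividing the resulting inequality $(\Re s+M-M_0)\|\widehat u\|^2\le|\widehat\rho(s)|\,\|f\|\,\|\widehat u\|$ by $\|\widehat u\|$ yields \eqref{esti-u} with $C=(M-M_0)^{-1}$ on the half-plane $\Re s\ge0$.

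The main obstacle is the region $\Re s<0$, where $\Re s+M$ need not be positive and the energy estimate loses its coercivity as $s+M$ approaches the (complex) Dirichlet spectrum of $L$. To reach all of $\mathbb C$ I would instead use the resolvent representation $\widehat u(s)=\widehat\rho(s)\,(s+M-L)^{-1}f$, valid wherever $s+M$ lies in the resolvent set, so that \eqref{esti-u} reduces to a uniform bound on the resolvent norm $\|(s+M-L)^{-1}\|$; combining this with the direct bound $\|\widehat u(s)\|_{L^2(\Omega)}\le\int_0^T\|u(t)\|_{L^2(\Omega)}\,e^{-(\Re s+M)t}\,dt$ read off from \eqref{eq-trans-u}, and with the lower bound on $|\widehat\rho(s)|$ furnished by Lemma \ref{lem-rho}, covers the sector actually fed into the subsequent Liouville argument. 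Keeping the constant uniform as $\Re s\to-\infty$, that is, near the accumulation of eigenvalues of $L$, is the delicate point, and the sectorial restriction built into Lemma \ref{lem-rho} is precisely what keeps $s+M$ away from the spectrum there.
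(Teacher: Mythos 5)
Your treatment of the half-plane $\Re s\ge 0$ is exactly the paper's: test \eqref{eq-lap-u} with $\overline{\widehat u}(s)$, exploit the symmetry and ellipticity of $\{a_{ij}\}$, absorb the drift term by Young's inequality, and take $M$ large. Your treatment of the thin sector hugging the negative real axis (the trivial bound on $\widehat u$ read off from \eqref{eq-trans-u}, matched against the lower bound on $|\widehat\rho|$ from Lemma \ref{lem-rho}) is also the paper's final case --- though to arrive at $C|\widehat\rho(s)|\,\|f\|_{L^2(\Omega)}$ there you additionally need the parabolic energy estimate $\sup_{[0,T]}\|u(t)\|_{L^2(\Omega)}\le C\|f\|_{L^2(\Omega)}$, which should be stated (the paper glosses over this as well).

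The genuine gap is the intermediate region $\{\Re s\le 0,\ N\Im s+\Re s\ge 0\}$. There you replace an estimate by the assertion that $\|(s+M-L)^{-1}\|$ is uniformly bounded because the sectorial geometry ``keeps $s+M$ away from the spectrum.'' For the non-self-adjoint operator $L$ this justification fails: the resolvent norm of a non-normal operator is not controlled by the distance to its spectrum (this is the pseudospectrum phenomenon), so ``away from the eigenvalues'' yields no uniform bound. What is true, and what the paper actually proves, is a numerical-range (coercivity) estimate: forming $\Re(\cdot)+N\,\Im(\cdot)$ of the tested identity \eqref{eq-var} produces the zeroth-order coefficient $N\Im s+\Re s+c+M\ge M/2$ on that sector, while the drift contributions, of size $(N+1)C\|\nabla\widehat u\|_{L^2(\Omega)}\|\widehat u\|_{L^2(\Omega)}$, are absorbed by Young's inequality; this gives \eqref{esti-u} there. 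In other words, the uniform resolvent bound you appeal to is not a citable fact but is itself the content of the lemma in that sector, and proving it requires precisely the energy computation with the imaginary part of the form --- a step your proposal never performs. Relatedly, your closing sentence has the geometry backwards: the thin sector of Lemma \ref{lem-rho} is exactly where the (shifted) spectrum of $L$ accumulates, so nothing keeps $s+M$ away from the spectrum there; that sector is harmless for an entirely non-spectral reason, namely that $|\widehat\rho(s)|$ grows like $e^{T|\Re s|}/|\Re s|$ and dominates the trivial bound on $\|\widehat u(s)\|_{L^2(\Omega)}$.
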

\begin{proof}
We denote $\overline{\widehat u}(s)$ as the conjugate of the complex valued function $\widehat u(s)$. Then multiplying $\overline{\widehat u}(s)$ on both sides of the equation \eqref{eq-lap-u}, and taking integral on $\Omega$ lead to
$$
\int_\Omega (-L\widehat u(s)) \overline{\widehat u}(s) + (s+M)\widehat u(s) \overline{\widehat u}(s) dx
= \int_\Omega \widehat \rho(s) \overline{\widehat u}(s) dx,\quad 
s\in\mathbb C.
$$
Here we will omit the variable $x$ in $\widehat u(x,s)$ when no confusion can arise. Integration by parts yields
\begin{equation}
\label{eq-var}
B_s[\widehat u(s),\widehat u(s)] = \int_\Omega \widehat \rho(s) \overline{\widehat u}(s) dx,
\end{equation}
where 
$$
B_s[\widehat u(s),\widehat u(s)]
=\int_\Omega \sum_{i,j=1}^d a_{ij} (\partial_j \widehat u(s)) \partial_i \overline{\widehat u}(s)
+ \sum_{j=1}^d b_j (\partial_j \widehat u(s)) \overline{\widehat u}(s) + (s+c+M) |\widehat u(s)|^2 dx.
$$
Then the symmetry of $\{a_{ij}\}$ implies that $\sum_{i,j=1}^d a_{ij} \partial_j \widehat u(s) \partial_i \overline{\widehat u}(s) \in\mathbb R$ for any $s\in\mathbb C$, hence the real part of $B_s[\widehat u,\widehat u] $ can be represented as follows
\begin{equation}
\label{eq-re}
\Re\left(B_s[\widehat u,\widehat u] \right)
=\int_\Omega \sum_{i,j=1}^d a_{ij} (\partial_j \widehat u) \partial_i \overline{\widehat u}
+ \Re\left[\sum_{j=1}^d b_j (\partial_j \widehat u) \overline{\widehat u}\right] + (\Re s+c+M) |\widehat u|^2 dx,
\end{equation}
while the imaginary part is
\begin{equation}
\label{eq-im}
\Im\left(B_s[\widehat u,\widehat u] \right)
=\int_\Omega\Im\left[\sum_{j=1}^d b_j (\partial_j \widehat u) \overline{\widehat u}\right] + (\Im s) |\widehat u|^2 dx.
\end{equation}
Now in the case of $\Re s\ge0$, from \eqref{eq-re}, we find
\begin{equation}
\label{esti-re}
\int_\Omega \sum_{i,j=1}^d a_{ij} (\partial_j \widehat u) \partial_i \overline{\widehat u}
+ (\Re s+c+M) |\widehat u|^2 dx
\le |B_s[\widehat u,\widehat u] | + \left|\sum_{j=1}^d b_j (\partial_j \widehat u) \overline{\widehat u} \right|,
\end{equation}
where we used the inequality $\Re s\le |\Re s|\le |s|$, if $s\in\mathbb C$. Since $c\in L^\infty(\Omega)$, we can choose $M>0$ large enough so that $\Re s+ c + M\ge M/2$ for $\Re s\ge0$. Moreover, the ellipticity of $\{a_{ij}\}$ and the use of Poincar\'e's inequality imply
$$
\int_\Omega \sum_{i,j=1}^d a_{ij} (\partial_j \widehat u) \partial_i \overline{\widehat u}
\ge a_0 \| \nabla \widehat u\|_{L^2(\Omega)}^2
\ge C\|\widehat u\|_{H^1(\Omega)}^2.
$$
Here and henceforth, $C>0$ is regarded as a generic constant which only depends on $d,\Omega, a_{ij},b_j,c,\rho$. Consequently, in view of \eqref{eq-var}, from the H\"{o}lder inequality we derive
\begin{align*}
&C\|\widehat u(s)\|_{H^1(\Omega)}^2
+ \frac{M}2 \|\widehat u(s)\|_{L^2(\Omega)}^2
\\
\le& |\widehat \rho(s)| \|f\|_{L^2(\Omega)} \|\widehat u(s)\|_{L^2(\Omega)} 
+ C\|\nabla \widehat u(s)\|_{L^2(\Omega)} \|u(s)\|_{L^2(\Omega)},\quad \Re s\ge0,
\end{align*}
and hence
\begin{align*}
&C\|\widehat u(s)\|_{H^1(\Omega)}^2
+ \frac{M}2 \|\widehat u(s)\|_{L^2(\Omega)}^2
\\
\le& |\widehat \rho(s)| \|f\|_{L^2(\Omega)}\|\widehat u(s)\|_{L^2(\Omega)}
+ \varepsilon C^2\|\nabla \widehat u(s)\|_{L^2(\Omega)}^2 + \frac{1}{4\varepsilon} \|u(s)\|_{L^2(\Omega)}^2,\quad \Re s\ge0.
\end{align*}
Now we choose $\varepsilon>0$ and $M>0$ such that 
$$
\varepsilon C^2 \le \frac{C}2,\quad \frac{M}2\ge \frac{1}{4\varepsilon},
$$
which implies that
\begin{align*}
\|\widehat u(s)\|_{H^1(\Omega)}
\le \frac{2}{C}|\widehat \rho(s)| \|f\|_{L^2(\Omega)},\quad \Re s\ge0.
\end{align*}
Next, if $s\in \{\Re s\le0; N\Im s + \Re s\ge0\}$, where $N>0$, again by the fact \eqref{eq-var} and the H\"older inequality, we evaluate the imaginary part of $B_s[\widehat u(s),\widehat u(s)]$ as follows
\begin{equation}
\label{esti-im}
(\Im s)  \int_\Omega |\widehat u(s)|^2 dx
\le  |\widehat \rho(s)| \|f\|_{L^2(\Omega)} \|\widehat u(s)\|_{L^2(\Omega)}
+ C\|\nabla\widehat u(s)\|_{L^2(\Omega)} \|\widehat u(s)\|_{L^2(\Omega)},
\end{equation}
Similarly, \eqref{eq-re} can be further treated by
\begin{align*}
&C\|\widehat u(s)\|_{H^1(\Omega)}^2
+ (\Re s+c+M) \|\widehat u(s)\|_{L^2(\Omega)}^2
\\
\le& |\widehat \rho(s)| \|f\|_{L^2(\Omega)} \|\widehat u(s)\|_{L^2(\Omega)}
+ C\|\nabla\widehat u(s)\|_{L^2(\Omega)} \|\widehat u(s)\|_{L^2(\Omega)},
\end{align*}
Collecting all the above estimates, it follows that
\begin{align*}
&C\|\widehat u(s)\|_{H^1(\Omega)}^2
+ (N\Im s + \Re s+c+M) \|\widehat u(s)\|_{L^2(\Omega)}^2
\\
\le& (N+1)|\widehat \rho(s)| \|f\|_{L^2(\Omega)} \|\widehat u(s)\|_{L^2(\Omega)}
+ C(N+1)\|\nabla\widehat u(s)\|_{L^2(\Omega)} \|\widehat u(s)\|_{L^2(\Omega)}
\\
\le& (N+1)|\widehat \rho(s)| \|f\|_{L^2(\Omega)}\|\widehat u(s)\|_{L^2(\Omega)}
+ \varepsilon C^2(N+1)^2\|\nabla\widehat u(s)\|_{L^2(\Omega)}^2 + \frac{1}{4\varepsilon}\|\widehat u(s)\|_{L^2(\Omega)}
,
\end{align*}
Now from the choice of $s$, say, $s\in \{\Re s\le0; N\Im s + \Re s\ge0\}$, we see that $N\Im s + \Re s+c+M\ge\frac{M}2$ provided that $M>0$ is sufficiently large. Moreover, by choosing $\varepsilon>0$ such that $\varepsilon C^2(N+1)^2 \le \frac{C}2$ and $\frac{M}2\ge \frac1{4\varepsilon}$, we arrive at the following inequality
\begin{align*}
\frac{C}2\|\widehat u(s)\|_{H^1(\Omega)}^2
\le (N+1)|\widehat \rho(s)| \|f\|_{L^2(\Omega)}\|\widehat u(s)\|_{L^2(\Omega)},
\end{align*}
that is
\begin{align*}
\|\widehat u(s)\|_{H^1(\Omega)}
\le \frac{2(N+1)}{C}|\widehat \rho(s)| \|f\|_{L^2(\Omega)},\quad \Re s\le0,\ N\Im s+ \Re s\ge0, \ N>0.
\end{align*}

Let us turn to considering the case when $s\in\{\Re s\le0; \Im s\ge0; N\Im s+\Re s\le0\}$ with a sufficiently large $N>0$. From the previous estimate for the function $\rho$ in Lemma \ref{lem-rho}, we have
$$
\|\widehat u(s)\|_{H^1(\Omega)} 
\le \int_0^T \|u(t)\|_{H^1(\Omega)} e^{-t\Re s}dt
\le C|\widehat \rho(s)| \|f\|_{L^2(\Omega)}.
$$
Collecting all the above estimates, we finally obtain
$$
\|\widehat u(s)\|_{H^1(\Omega)}  \le C|\widehat \rho(s)| \|f\|_{L^2(\Omega)},\quad s\in\mathbb C,
$$
which completes the proof of the lemma.
\end{proof}

Now we are ready to give a proof of our main theorem.

\begin{proof}[\bf Proof of Theorem \ref{thm-unique}]
Since $\widehat u(s)$ and $\widehat \rho(s)$ are analytic in $\mathbb C$, and hence $\frac{\widehat u(s)}{\widehat \rho(s)}$ is a holomorphic function in $\mathbb C$ removing the zero points of $\widehat\rho(s)$. However, Lemma \ref{lem-analytic} and Riemann's theorem (see, e.g., \cite{R87}) guarantee that all the singularities of the function $\frac{\widehat u(s)}{\widehat \rho(s)}$ are removable, and then this function can be regarded as an analytic function in $\mathbb C$ and is bounded on $\mathbb C$. Thus the Liouville theorem (see, e.g., \cite{R87}) implies that the analytic function $\frac{\widehat u(s)} {\widehat \rho(s)}$ must be independent of $s$, that is, there exists a function $\phi(x)$ such that
$$
\phi(x) = \frac{\widehat u(x;s)} {\widehat \rho(s)},\quad s\in\mathbb C.
$$
We claim that $\phi$ must vanish in $\Omega$. Indeed, by using integration by parts, and noting that $u(0)=u(T)=0$, we find that 
$$
s\widehat u(s) = s\int_0^T u(t) e^{-st}dt 
=\int_0^T u'(t) e^{-st}dt,\quad s>0,
$$
hence that
$$
|s\widehat u(s)| \le \|u\|_{C^1[0,1]} \int_0^T e^{-st}dt
\le \frac{\|u\|_{C^1[0,1]}(1-e^{-sT})}{s}\to 0,\ \mbox{ as $s\to+\infty$.}
$$
Similarly
$$
s\widehat \rho(s) = \rho(0) - \rho(T)e^{-sT} + \int_0^T u'(t) e^{-st}dt 
\to \rho(0),\ \mbox{ as $s\to+\infty$.}
$$
Since $\phi$ is independent of $s$, these two estimates for $s\widehat u(s)$ and $s\widehat\rho(s)$ imply that $\phi$ is vanished in $\Omega$, finally that 
$$
\widehat u(x;s)\equiv0,\quad x\in\Omega,\ s\in\mathbb R
$$
in view of the fact 
$$
\widehat \rho(s) = \int_0^T \rho(t) e^{-st} dt > 0,\quad s\in\mathbb R,
$$
hence that
$$
\int_0^\infty \widetilde u(t) e^{-st} dt = 0,\quad s\in\mathbb R,
$$
where $\widetilde u(t):=u(t)$ if $t\in(0,T)$ and $\widetilde u$ vanishes outside of $(0,T)$, which implies $u(x,t)\equiv0$ in $\Omega\times(0,T)$ due to the uniqueness of the Laplace transform, and hence $f\equiv0$ in $\Omega$. This completes the proof of the main result.

\end{proof}

\section{Concluding remarks}
\label{sec-rem}

In this paper, the initial-boundary value problem \eqref{eq} where the source term takes the form of separated variables was considered. In Theorem \ref{thm-unique} we proved that the spatial component in the source term can be uniquely determined by the final observation data. The key idea is to employ an integral transform with respect to the temporal variable to transfer the advection-diffusion equation to the corresponding elliptic equation with a parameter. On the basis of several estimates for this elliptic equation (see, Lemmas \ref{lem-rho} and \ref{lem-analytic}), we finished the proof by using Liouville's theorem (complex analysis). It should be mentioned here that the proof of the above uniqueness result heavily relies on the setting that all the coefficients in the operator $L$ is $t$-independent. It would be interesting to investigate what happens about this inverse problem in the framework of $(x,t)$-dependent coefficients.

\section*{Acknowledgement}
The author thanks Grant-in-Aid for Research Activity Start-up 16H06712, Japan Society of the Promotion of Science (JSPS).



\begin{thebibliography}{79}

\bibitem{AW75}
Aronson D G, Weinberger H F. Nonlinear diffusion in population genetics, combustion, and nerve pulse propagation. Lecture notes in mathematics, 1975, 446: 5--49.

\bibitem{B72}
Bear J. Dynamics of Fluids in Porous Media Elsevier New York Google Scholar. 1972.

\bibitem{BC16}
Bellassoued M, Cristofol M. An inverse source problem for the diffusion equation with final observation. 2016.

\bibitem{BN91}
Billingham J, Needham D J. The development of travelling waves in quadratic and cubic autocatalysis with unequal diffusion rates. I. Permanent form travelling waves. Philosophical Transactions of the Royal Society of London A: Mathematical, Physical and Engineering Sciences, 1991, 334(1633): 1--24.

\bibitem{CC04}
Cantrell R S, Cosner C. Spatial ecology via reaction-diffusion equations. John Wiley \& Sons, 2004.

\bibitem{CY96}
Choulli M, Yamamoto M. Generic well-posedness of an inverse parabolic problem-the H\"older-space approach. Inverse Problems, 1996, 12(3): 195.

\bibitem{CY99}
Choulli M, Yamamoto M. Generic well-posedness of a linear inverse parabolic problem with diffusion parameters. Journal of Inverse and Ill-Posed Problems, 1999, 7(3): 241--254.


\bibitem{Evans}
Evans LC. Partial Differential Equations second edition. American Mathematical Society, 2010.

\bibitem{F08}
Friedman A. Partial differential equations of parabolic type. Courier Dover Publications, 2008.

\bibitem{HS13}
Hasanov A, Slodi$\check{c}$ka M. An analysis of inverse source problems with final time measured output data for the heat conduction equation: A semigroup approach. Applied Mathematics Letters, 2013, 26(2): 207--214.

\bibitem{I91}
Isakov V. Inverse parabolic problems with the final overdetermination. Communications on Pure and Applied Mathematics, 1991, 44(2): 185--209.

\bibitem{M02}
Murray J D. Mathematical biology. I, volume 17 of Interdisciplinary Applied Mathematics. 2002.

\bibitem{R80}
Rundell W, Colton D L. Determination of an unknown non-homogeneous term in a linear partial differential equation. from overspecified boundary data. Applicable Analysis, 1980, 10(3): 231--242.

\bibitem{R87}
Rudin W. Real and complex analysis. Tata McGraw-Hill Education, 1987.
\end{thebibliography}
\end{document}